\newtheorem{Claim}{Claim}
\theoremstyle{definition}
\def\FF{{\mathbb F}}
\begin{document}

\title{A note on a paper by R. C. Alperin}

\author{
Jean Chessa and Luisa Paoluzzi
}
\date{\today}
\maketitle

\begin{abstract}

\vskip 2mm
We point out a mistake in the statement of Corollary 2 in R.~C.~Alperin's paper
on Selberg's lemma. 

\vskip 2mm

\noindent\emph{MSC 2020:} Primary 20H20; Secondary 20E26.

\vskip 2mm

\noindent\emph{Keywords:} Linear groups, finitely presented groups, torsion
groups.

\end{abstract}

\section{Introduction}

The aim of this short note is to point out an incorrect statement in Alperin's 
paper ``An elementary account of Selberg's lemma" which appeared in
L'Enseignement Math\'ematique \cite{Alp}. The main focus of the paper is to 
provide an accessible proof of Selberg's lemma, which affirms that any linear 
group defined over a field of characteristic zero contains a torsion free 
subgroup of finite index.

Selberg's lemma has several applications, both in algebra and geometry. Some 
straightforward consequences can also be found in Alperin's paper. 
Unfortunately, among these, part of the statement of Corollary 2 on page 272 is 
wrong. The corollary reads: 

\emph{The torsion subgroups of a finitely generated linear group $G$ are 
finite; moreover, in characteristic zero, these finite groups have bounded 
order.}

Although the corollary is correct in characteristic zero, the assertion is
false in positive characteristic. We believe that, in the proof, 
it is implicitely assumed that torsion subgroups of finitely generated linear 
groups are finitely generated. The rest of this note will be devoted to show
that this is however not always the case.


\section{Counterexamples to the statement}

The examples in the following claim are not new and can be found in
\cite[Example 26.9, page 779]{Nic}.

\begin{Claim}
Let $n\ge 3$ and $p$ a prime number. Let $\FF_p[t]$ be the ring of polynomials
in one variable over the field $\FF_p$ with $p$ elements. The special linear 
group $SL(n,\FF_p[t])$ is finitely generated and contains the infinite torsion 
group $H=\{I_n+xE_{1\,n}\mid x\in \FF_p[t]\}$, where $E_{1\,n}$ denotes the 
$n\times n$ matrix with the $(1,n)$ entry equal to $1$ and all other entries 
equal to $0$.
\end{Claim}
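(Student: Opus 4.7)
The plan is to split the claim into two independent assertions: that $H$ is an infinite torsion subgroup of $SL(n,\FF_p[t])$, and that $SL(n,\FF_p[t])$ is finitely generated for $n\ge 3$. The first is a short calculation, the second is the classical fact that follows from the Euclidean structure of $\FF_p[t]$ combined with the Steinberg commutator relations, which require $n\ge 3$ to be used effectively.

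First, I would verify that $H$ is a subgroup and describe its structure. Since $n\ge 2$, the product $E_{1\,n}E_{1\,n}$ vanishes because computing the $(i,j)$-entry would require a column index $k$ equal to both $n$ and $1$. Hence for $x,y\in\FF_p[t]$ we obtain $(I_n+xE_{1\,n})(I_n+yE_{1\,n})=I_n+(x+y)E_{1\,n}$. This shows that $H$ is a subgroup isomorphic to the additive group $(\FF_p[t],+)$. The latter is an infinite $\FF_p$-vector space (a basis being $\{1,t,t^2,\dots\}$), so every non-identity element has order $p$. Therefore $H$ is an infinite torsion subgroup of $SL(n,\FF_p[t])$.

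Next I would establish finite generation of $SL(n,\FF_p[t])$. Since $\FF_p[t]$ is a Euclidean domain, the standard row/column reduction argument shows that every matrix in $SL(n,\FF_p[t])$ is a product of elementary transvections $E_{ij}(x):=I_n+xE_{ij}$ with $i\ne j$ and $x\in\FF_p[t]$. A priori this is an infinite generating set, parameterised by $x$. To reduce to a finite one, I would use the Steinberg commutator identities, valid for distinct indices $i,j,k$, namely $[E_{ij}(x),E_{jk}(y)]=E_{ik}(xy)$, together with the additivity $E_{ij}(x)E_{ij}(y)=E_{ij}(x+y)$. The hypothesis $n\ge 3$ is essential precisely to have three distinct indices available. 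Starting from the finite set $\mathcal{S}=\{E_{ij}(1),E_{ij}(t)\mid i\ne j\}$, one obtains inductively each $E_{ik}(t^m)$ as an iterated commutator, and then each $E_{ij}(x)$ for arbitrary $x\in\FF_p[t]$ by additivity. Hence $\mathcal{S}$ generates $SL(n,\FF_p[t])$.

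The main obstacle is the finite generation step, and within it the reliance on $n\ge 3$: in rank two the commutator trick collapses and the statement actually fails (for instance $SL(2,\FF_p[t])$ is not finitely generated, by Nagao's theorem), which is why the hypothesis $n\ge 3$ is included. Once these two parts are in place, $H\subset SL(n,\FF_p[t])$ provides the desired infinite torsion subgroup of a finitely generated linear group in positive characteristic, in conflict with the boundedness assertion in the corollary under discussion.
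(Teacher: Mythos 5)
Your proposal is correct, and it handles the two halves of the Claim with different levels of self-containment than the paper. The part about $H$ is the same argument as in the paper: the computation $(I_n+xE_{1\,n})(I_n+yE_{1\,n})=I_n+(x+y)E_{1\,n}$ identifies $H$ with the additive group of $\FF_p[t]$, an infinite $\FF_p$-vector space in which every nontrivial element has order $p$. For the finite generation of $SL(n,\FF_p[t])$ with $n\ge 3$, however, the paper simply invokes Behr's theorem on arithmetic groups over function fields, whereas you sketch a direct proof: since $\FF_p[t]$ is Euclidean, $SL(n,\FF_p[t])$ is generated by the elementary matrices $E_{ij}(x)=I_n+xE_{ij}$, and the Steinberg relation $[E_{ij}(x),E_{jk}(y)]=E_{ik}(xy)$ (available because $n\ge 3$ provides three distinct indices) together with additivity $E_{ij}(x)E_{ij}(y)=E_{ij}(x+y)$ reduces everything to the finite set $\{E_{ij}(1),E_{ij}(t)\mid i\ne j\}$: inductively $E_{ik}(t^m)=[E_{ij}(t^{m-1}),E_{jk}(t)]$, and a general $E_{ij}(x)$ is a product of powers of the $E_{ij}(t^m)$ since the coefficients lie in $\FF_p$. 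This argument is valid (the elementary-generation step is exactly where the Euclidean structure of $\FF_p[t]$ is used, and your remark that the method breaks down for $n=2$, where Nagao's theorem shows $SL(2,\FF_p[t])$ is not finitely generated, correctly explains the hypothesis $n\ge 3$). What your route buys is an elementary, self-contained justification of finite generation in this special case; what the citation of Behr buys is brevity and a statement valid in much greater generality. One small addition worth keeping from the paper's proof: it also records that $H$, being of exponent $p$ and infinite, is not finitely generated, which is the point of contact with Burnside's theorem discussed afterwards.
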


\begin{proof}
The group $SL(n,\FF_p[t])$ is clearly linear over the field $\FF_p(t)$ of 
positive characteristic $p$. For $n\ge 3$, it was shown to be finitely 
generated by Behr in \cite{Beh}.

Since $(I_n+xE_{1\,n})(I_n+yE_{1\,n}) =I_n+(x+y)E_{1\,n}$, $H$ is a subgroup of 
$SL(n,\FF_p[t])$ isomorphic to the additive Abelian $p$-group $\FF_p[t]$. This
is an infinite dimensional vector space over the field $\FF_p$, so it is a
non finitely generated group of exponent $p$. This shows in particular that it 
is an infinite torsion group.
\end{proof}

In positive characteristic there are thus torsion subgroups of finitely 
generated linear groups that are not finite, contradicting Corollary 2 of
Alperin's paper. On the other hand, by Burnside's theorem 
(see \cite[Corollary 3]{Alp}) finitely generated torsion subgroups are 
indeed finite, regardless of the characteristic.


\footnotesize

\textsc{D\'partement de Math\'ematiques, ENS de Lyon, France}

{jean.chessa@ens-lyon.fr}

\textsc{Aix-Marseille Univ, CNRS, Centrale Marseille, I2M, UMR 7373,
13453 Marseille, France}

{luisa.paoluzzi@univ-amu.fr}


\begin{thebibliography}{XXXXX}

\bibitem[Alp]{Alp} R. C. Alperin, 
\emph{An elementary account of Selberg’s lemma}, 
Enseign. Math. \textbf{33}, (1987), 269–273.

\bibitem[Beh]{Beh} H. Behr,
\emph{Endliche Erzeugbarkeit arithmetischer Gruppen \"uber 
Funktionenk\"orpern}, Inv. Math. \textbf{7}, (1969), 1-32.

\bibitem[Nic]{Nic} B. Nica,
\emph{Three theorems on linear groups},
Appendix to ``Geometric Group Theory", by C. Dru\c{t}u and M. Kapovic, 
Colloquium Publications \textbf{63}, Amer. Math. Soc. Providence, RI (2018),
777-786.


\end{thebibliography}
\end{document}